\documentclass[12pt,a4paper]{article}
\usepackage{microtype,amssymb,amsmath,amsthm}
\usepackage[margin=1.4in]{geometry}
\usepackage[english]{babel}
\usepackage[utf8x]{inputenc}
\usepackage[colorinlistoftodos]{todonotes}
\usepackage[colorlinks=true, allcolors=blue]{hyperref}
\author{Johan Andersson\thanks{Email:johan.andersson@oru.se \, Address:Department of Mathematics, School of Science and Technology, {\"O}rebro University, {\"O}rebro, SE-701 82 Sweden. }}
 
\title{Voronin universality on the abscissa of absolute convergence}

\theoremstyle{plain} 
\newtheorem{thm}{Theorem}  
\newtheorem{lem}{Lemma}

\newcommand{\abs}[1]{{\left| {#1} \right|}} 
\newcommand{\p}[1]{{\left(
     {#1} \right)}}
\newcommand{\C}{{\mathbb C}} 
\newcommand{\R}{{\mathbb R}}
\renewcommand{\Re}{\operatorname{Re}} 
 
   \date{}
\begin{document}

\maketitle        
\begin{abstract} 
   We prove that  the Voronin universality theorem for the Riemann zeta-function extends to the line $\Re(s)=1$   if in addition to vertical shifts we also allow scaling and adding a sufficiently large constant.
\end{abstract}

\maketitle
\section{Introduction and main results}
Voronin \cite{Voronin1} proved that
\begin{gather*}
  \{(\zeta(1+it),\zeta'(1+it),\ldots,\zeta^{(n)}(1+it))| \, t \in \R \}
\end{gather*}
is dense in $\C^n$. In contrast we proved \cite{Andersson1} that the related more general theorem, the Voronin universality theorem \cite{Voronin2} which on the one-line would say that any continuous function $f(t)$ on an interval $[0,H]$ could be approximated in sup-norm to any given accuracy by shifts $\zeta(1+iT+it)$ does not hold. This is a consequence of \cite[Theorem 8]{Andersson1}
\begin{gather} \label{uu}
 \inf_T  \max_{T \leq t \leq T+\delta} \abs{\zeta(1+it)} = \frac{\pi^2 e^{-\gamma}}{24} \delta+O(\delta^3),  
\end{gather}
since this implies that the function $f(t)=0$ can not be approximated to any given accuracy by the Riemann zeta-function. For further discussion on how these results are related see \cite[pp. 2--3]{Andersson2}. We note  that the bound in \eqref{uu} does depend on  $\delta$ and if we also allow scaling it follows from \eqref{uu} that the function $f(t)=0$ may be approximated by the Riemann-zeta function on the line $\Re(s)=1$. Indeed if $f(t)=0$ on the interval $[0,1]$ then given $\varepsilon>0$ there exist some $\delta>0$ and some $T>0$ such that 
\begin{gather*}
   \max_{0 \leq t \leq 1} 
   \abs{\zeta(1+iT+i \delta t)-f(t)}< \varepsilon.
\end{gather*}
The purpose of this paper is to extend this observation to any continuous function $f$ on a compact set $K$ with connected complement, where $f$ is  analytic in the interior of $K$. In doing so we also need to introduce a constant term.
   \begin{thm} \label{th} Let
 $K \subset \C$ be a compact set with connected complement, and suppose that $f$ is any continuous function on $K$ that is analytic  in the interior of $K$.  Then for any $\varepsilon>0$ there exist $C_0,\delta_0>0$ such that for any $0<\delta\leq \delta_0$  and  $|C|>C_0$ then
\begin{gather*}
 \liminf_{T \to \infty} \frac 1 T \mathop{\rm meas} \left \{t \in [0,T]:\max_{s \in K} \abs{\zeta(1+it+\delta s)+C-f(s)}<\varepsilon \right \}>0. 
\end{gather*}
\end{thm}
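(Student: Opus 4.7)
The strategy is to reduce the theorem to a positive-density joint approximation for the vectors $(\zeta^{(k)}(1+it))_{k=0}^n$ on the line $\Re(s)=1$ by combining Mergelyan and a Taylor expansion of $\zeta$ around $1+it$. Since $K$ has connected complement and $f$ is continuous on $K$ and analytic in its interior, Mergelyan produces a polynomial $p(s)=\sum_{k=0}^n a_k s^k$ with $\max_{s\in K}|f(s)-p(s)|<\varepsilon/3$, reducing the problem to the approximation of $p$. With $R=\max_{s\in K}|s|$ and a Cauchy radius $r=1/4$, for $t\geq 1$ and $\delta R<r$ one has
\[
 \zeta(1+it+\delta s)=\sum_{k=0}^n\frac{\zeta^{(k)}(1+it)}{k!}(\delta s)^k+E_n(t,s),\quad |E_n(t,s)|\leq \frac{M_t(\delta R/r)^{n+1}}{1-\delta R/r},
\]
where $M_t=\max_{|v|=r}|\zeta(1+it+v)|$. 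Matching coefficients with $p(s)-C$ reduces the desired inequality to the simultaneous conditions $|\zeta^{(k)}(1+it)-b_k|<\eta$ for $k=0,\ldots,n$ together with $M_t\leq M_0$, where $b_0=a_0-C$ and $b_k=k!\,a_k/\delta^k$ for $k\geq 1$; the thresholds $\eta$, $M_0$, $\delta_0$ are to be chosen so that main-term and tail errors together sit below $\varepsilon$.

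The core ingredient is therefore a positive-density joint approximation at $\Re(s)=1$. The topological density in $\C^{n+1}$ from \cite{Voronin1} is to be upgraded to a Bagchi-type identification of the limit law (as $t$ is uniform in $[0,T]$, $T\to\infty$) with that of the random vector $(\zeta^{(k)}(1,\omega))_{k=0}^n$ attached to the random Euler product $\zeta(s,\omega)=\prod_p(1-\omega_p p^{-s})^{-1}$, the $\omega_p$ being i.i.d.\ uniform on the unit circle. The support of this random vector in $\C^{n+1}$ equals $\C^*\times\C^n$: indeed $\zeta(1,\omega)$ is almost surely non-zero, while the joint logarithmic derivatives $((\log\zeta)^{(k)}(1,\omega))_{k=0}^n$ have full support in $\C^{n+1}$ (the main contributions $(\omega_p/p)(1,-\log p,\ldots,(-\log p)^n)$ from large primes are Vandermonde-independent and the phases $\omega_p$ supply Kronecker density). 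Setting $C_0=|a_0|+1$ and taking $|C|>C_0$ forces $|b_0|>1$, placing the target safely inside the support, which gives positive lower density for the joint approximation set. The auxiliary constraint $M_t\leq M_0$ is compatible with any such set thanks to the standard second-moment bound $\int_0^T M_t^2\,dt\ll T$, which makes $\{t:M_t\leq M_0\}$ have lower density approaching $1$ as $M_0\to\infty$.

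The principal obstacle is exactly this positive-density joint approximation on the boundary $\Re(s)=1$, because the Euler product does not converge absolutely on that line: the Bagchi-type construction must be carried out via the almost-sure convergence of $\sum_p \omega_p/p$ (and the corresponding derivative series), and the distributional matching with $(\zeta^{(k)}(1+it))_k$ must be done directly on the abscissa rather than imported from inside the strip. Once that ingredient is in hand, the proof closes by the assembly above, with $C_0=|a_0|+1$, $\eta=\varepsilon/(3e^{\delta_0 R})$, $M_0$ large enough for the moment-bound step, and $\delta_0$ small enough that $M_0(\delta_0 R/r)^{n+1}/(1-\delta_0 R/r)<\varepsilon/3$.
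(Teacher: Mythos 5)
Your route is genuinely different from the paper's. The paper does \emph{not} attempt to approximate the derivative vector $(\zeta^{(k)}(1+it))_{k\le n}$ at the boundary point $\Re(s)=1$. Instead it approximates $\log\zeta$ on the shifted compact set $1+\delta K$, which for small $\delta$ lies in the open half-plane $\Re(s)>\tfrac34$, where a clean positive-density universality statement (Lemma~\ref{LALA}, i.e.\ \cite[Thm.~4.12]{Steuding}) is already available. It then replaces the Pechersky rearrangement argument with an explicit Euler-product construction (Lemma~\ref{LA0}) tied to a Laplace-transform representation of the target (Lemma~\ref{LA4}), and the constant $C$ enters through that Laplace structure, not through any support constraint. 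Exponentiating via the elementary estimate $|e^z-1|<\tfrac32|z|$ then gives Theorem~\ref{th}.

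The gap in your proposal is exactly the step you yourself flag as the ``principal obstacle,'' and it is not a small one: you need a positive-lower-density version of Voronin's 1972 joint-denseness theorem for $(\zeta^{(k)}(1+it))_{k=0}^n$, together with an identification of the limiting law with the random Euler product $\zeta(1,\omega)$ and a determination of the support of $(\zeta^{(k)}(1,\omega))_{k=0}^n$. None of these is established in the proposal; the Bagchi-type argument on $\Re(s)=1$ (where the Euler product converges only conditionally, and where the series $\sum_p\omega_p(\log p)^k/p$ for $k\ge1$ also converge only conditionally) is a nontrivial project, and the Vandermonde/Kronecker sketch of full support is the hardest part of Voronin's original argument, not a routine remark. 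Since every other step of your plan (Mergelyan, Taylor expansion, matching coefficients, the moment bound controlling $M_t$) hinges on that one lemma, the proof as written does not close. A further wrinkle: your explanation of why $C_0$ is needed (``placing the target safely inside the support'') does not actually hold up, since the support $\C^*\times\C^n$ you claim is dense in $\C^{n+1}$, so balls around targets with $b_0=0$ would still have positive probability under the random model; the genuine role of $C$ in the published proof is to make $\log(1-f(s)/C)$ expandable and to make $g(x)/C$ satisfy the size hypothesis $|x\,g(x)/C|\le1$ of Theorem~\ref{th2}, which is an input your Taylor-jet approach does not see. Finally, even granting the joint lemma, the moment bound $\int_0^T M_t^2\,dt\ll T$ (with $M_t$ a maximum on a circle of radius $\tfrac14$) is asserted but not justified; one would need a subharmonicity/Fubini argument from pointwise second moments of $\zeta$ on $\Re(s)\ge\tfrac34$, which is fine but should be said. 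In short: correct high-level reduction, but the load-bearing lemma is missing, and the paper's own proof is precisely the way it avoids having to prove that lemma.
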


The Voronin universality theorem \cite{Voronin2} allow us to choose $\delta=1$ and $C=0$ for any $\varepsilon>0$ in Theorem \ref{th} when
$K \subset \{s \in \C: -\frac 1 2<\Re(s)<0\}$ and  $f$ is zero-free on  $K$. 
Which $C$ we can choose in Theorem 1 depends on the analytic properties of the function $f$. We can choose  $C=0$ for all $\varepsilon>0$ if $\log f$ is, up to the addition of a constant, the Laplace transform of a function bounded by $x^{-1}$. 
 \begin{thm} \label{th2} Let $K \subset \C$ be a compact set with connected complement, and suppose that 
 \begin{gather*}
      f(s)=C+\int_0^\infty g(x) e^{-sx}dx, \\ \intertext{for $s \in K$ where}  
        \abs{xg(x)} \leq 1.
 \end{gather*}
  Then for any $\varepsilon>0$ there exists some  $\delta_0>0$ such that for any $0<\delta\leq \delta_0$  then
\begin{gather*}
 \liminf_{T \to \infty} \frac 1 T \mathop{\rm meas} \left \{t \in [0,T]:\max_{s \in K} \abs{\log \zeta(1+it+\delta s)-f(s)}<\varepsilon \right \}>0. 
\end{gather*}
\end{thm}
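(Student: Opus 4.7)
\textit{Proof plan for Theorem \ref{th2}.} The plan is to realize $f(s)-C$ as a weighted prime sum $\sum_p \alpha_p p^{-1} e^{-\delta s \log p}$ and then invoke Voronin-style joint equidistribution of $\{(t/2\pi)\log p\}_{p \leq P}$ to select a positive-density set of shifts $t$ for which $p^{-it}$ simultaneously approximates the prescribed $\alpha_p$ for $p\leq P$. After a Runge approximation reducing to the case $\min_{s \in K}\Re(s) > 0$, the Dirichlet series for $\log\zeta$ converges absolutely on $1+it+\delta K$, and one splits
\begin{equation*}
  \log \zeta(1+it+\delta s) \,=\, \sum_p p^{-1-it} e^{-\delta s \log p} \,+\, R(1+it+\delta s),
\end{equation*}
where $R$ collects the contributions of prime powers $p^k$, $k \geq 2$. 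The remainder $R$ is uniformly bounded on $\Re\geq 1$ and varies almost periodically in $t$, so by a Bohr--Jessen-type argument it can be kept within $\varepsilon$ of any prescribed value in its support on a set of $t$ of positive density.

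The principal identification is driven by Mertens' theorem: for continuous $\psi$ of compact support in $(0,\infty)$,
\begin{equation*}
  \sum_p \frac{\psi(\delta \log p)}{p} \,=\, \int_0^{\infty} \psi(x)\,\frac{dx}{x} \,+\, o_\delta(1),
\end{equation*}
so $dx/x$ is the natural measure on the rescaled Laplace variable $x = \delta \log p$. This suggests the ansatz $\alpha_p := x_p\,g(x_p)$ with $x_p := \delta \log p$, which makes the prime sum formally approximate $\int_0^\infty g(x)\,e^{-sx}\,dx$; the hypothesis $|xg(x)|\leq 1$ is then precisely what yields $|\alpha_p|\leq 1$.

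The principal obstacle is that the realized coefficients $\alpha_p$ must be unit-modulus numbers $p^{-it}$, while only $|\alpha_p|\leq 1$. I bridge this by setting
\begin{equation*}
  \alpha_p \,=\, x_p\, g(x_p) \,+\, \sqrt{1 - |x_p\,g(x_p)|^2}\,e^{i\phi_p},
\end{equation*}
with auxiliary phases $\phi_p$ chosen to oscillate rapidly as $p$ varies; a quantitative Riemann--Lebesgue estimate then shows that the Laplace transform of the oscillatory correction is uniformly small on $K$, provided $\Re(s)$ is bounded away from $0$. Joint Weyl/Kronecker equidistribution of $\{(t/2\pi)\log p\}_{p \leq P}$ supplies a positive-density set of $t$ realizing $|p^{-it}-\alpha_p|<\eta$ for all $p\leq P$, and the tail $\sum_{p>P} p^{-1-\delta\Re(s)}$ is negligible for $P$ large. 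The delicate point is that the Riemann--Lebesgue cancellation degrades as $\Re(s)\downarrow 0$, so one must carefully balance $\delta$, the truncation $P$, and the oscillation rate of $\phi_p$ to make the whole scheme work uniformly in $s \in K$.
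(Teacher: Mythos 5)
Your overall strategy---encode $f(s)-C$ as a prime sum using the Mertens density $dx/x$ of $\{\delta\log p\}$, pad the coefficients to unit modulus, and invoke equidistribution of $(p^{-it})_p$---is the same as the paper's, which constructs unimodular $a_p$ so that $h(1+\delta s)=-\sum_p\log(1-a_p p^{-1-\delta s})$ approximates $f(s)$ and then cites a universality lemma. However, two of your steps have genuine gaps. The padding formula $\alpha_p = x_p g(x_p) + \sqrt{1-|x_p g(x_p)|^2}\,e^{i\phi_p}$ does \emph{not} give $|\alpha_p|=1$ for freely chosen $\phi_p$: writing $a=x_p g(x_p)$, $r=\sqrt{1-|a|^2}$, one has $|a+re^{i\phi}|^2 = 1 + 2r\Re(\bar a e^{i\phi})$, which equals $1$ only when $\phi=\arg a\pm\tfrac{\pi}{2}$. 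So the only freedom is a sign per prime, and your plan of ``phases $\phi_p$ chosen to oscillate rapidly'' plus a Riemann--Lebesgue estimate has no content as stated. The paper makes exactly this forced choice explicit, $a_{p_n}=\exp\bigl(i(\arg x_n+(-1)^n\arccos|x_n|)\bigr)$, and then cancels the alternating correction by pairing consecutive primes and doing summation by parts, using the gap bound $p_{n+1}-p_n\ll p_n/(\log p_n)^2$ and the $\mathcal C^1$-smoothness of (a mollified) $g$. That pairing argument is the real substitute for ``Riemann--Lebesgue,'' and your sketch omits it.

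The second gap is the claimed ``Runge approximation reducing to the case $\min_{s\in K}\Re(s)>0$.'' No such reduction is available: $K$ is arbitrary, and replacing $K$ by a translate $K+c$ changes the shift point from $1+it+\delta s$ to $1-\delta c+it+\delta s'$, so it does not prove the stated theorem. You need $\Re(s)>0$ because your argument relies on absolute convergence of the Dirichlet series of $\log\zeta$ on $1+it+\delta K$; the paper avoids this entirely by appealing to a universality lemma (Steuding, Theorem~4.12) valid on all of $\Re(s)>\tfrac12$, which only requires $\Re(1+\delta s)\geq\tfrac34$---automatic for small $\delta$ whatever $K$ is. Relatedly, your treatment of the prime-power remainder $R$ as an independent almost-periodic quantity that a Bohr--Jessen argument can steer to ``any prescribed value'' is confused: once the phases $p^{-it}$ are pinned near $\alpha_p$, $R$ is determined by those same phases. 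The paper sidesteps this by working with $-\log(1-a_p/p^{1+\delta s})$ directly and bounding $\log(1-z)+z=O(z^2)$ uniformly over $|a_p|=1$, absorbing the prime-power contribution into an $\varepsilon/7$ error rather than trying to control it separately.
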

\section{Proofs of main results}
\subsection{Main Lemmas}
In order to prove our main results  we need some  well-known fact from the theory of universality, which we state in the following convenient form. 
\begin{lem} \label{LALA}
  Let \begin{gather} \label{ep} h(s)=-\sum_{p}  \log\p{1-\frac{a_p}{ p^{s}}} \end{gather} where $|a_p|=1$ and the sum over the primes is convergent to an analytic function $h$ on the half-plane $\Re(s)>\frac 1 2$. Then for any $\varepsilon>0$ and compact set $K \subset \{s \in \C: \Re(s)>\frac 1 2\}$ we have that
  \begin{gather*}
 \liminf_{T \to \infty} \frac 1 T \mathop{\rm meas} \left \{t \in [0,T]:\max_{s \in K} \abs{\log \zeta(s+it)-h(s)}<\varepsilon \right \}>0. 
\end{gather*}
\end{lem}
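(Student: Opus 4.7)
The plan is to deduce Lemma \ref{LALA} from the standard Voronin--Bagchi machinery, specialised to the logarithm of $\zeta$. Without loss of generality I would enlarge $K$ so that it is contained in a closed rectangle $R \subset \{s : \tfrac12 < \sigma_0 \le \Re(s) \le \sigma_1\}$ and work on $R$; the passage to $K$ is a standard application of the Cauchy integral formula on $\partial R$. Given $\varepsilon>0$, I would choose $N$ large enough that the tail of the convergent series defining $h$ satisfies
\begin{gather*}
   \max_{s \in R} \Bigl| h(s) + \sum_{p \le N} \log\!\Bigl(1 - \frac{a_p}{p^s}\Bigr) \Bigr| < \varepsilon/3,
\end{gather*}
which is legitimate since the hypothesis is that the series $-\sum_p \log(1-a_p/p^s)$ converges to an analytic function on $\Re(s)>\tfrac12$.

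The proof then splits into two approximation steps. First (the Euler-product approximation), I would invoke the well-known mean-square bound
\begin{gather*}
   \int_0^T \Bigl| \log \zeta(\sigma+it) + \sum_{p \le N} \log\!\bigl(1 - p^{-\sigma-it}\bigr) \Bigr|^2 dt \;\le\; C(\sigma_0)\, T\, \psi(N),
\end{gather*}
valid uniformly for $\sigma \ge \sigma_0 > \tfrac12$ with $\psi(N) \to 0$ as $N \to \infty$, which rests on the Montgomery--Vaughan density estimate for zeros of $\zeta$ in $\Re(s) \ge \sigma_0$ combined with Parseval for Dirichlet series. Integrating in $\sigma$ and applying Chebyshev together with the Cauchy integral formula on a slightly enlarged rectangle, the set of $t \in [0,T]$ on which $\sup_{s \in R}|\log\zeta(s+it) + \sum_{p \le N}\log(1-p^{-s-it})| < \varepsilon/3$ has lower density at least $1 - o_N(1)$.

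Second (the equidistribution step), since $\{\log p : p \text{ prime}\}$ is linearly independent over $\mathbb{Q}$, Weyl's theorem tells me that the curve $t \mapsto (p^{-it})_{p \le N}$ is uniformly distributed on the torus $\mathbb{T}^{\pi(N)}$. Writing $a_p = e^{i\theta_p}$ and choosing $\eta>0$ small enough that $|p^{-it} - a_p| < \eta$ for all $p \le N$ forces
\begin{gather*}
   \max_{s \in R} \Bigl| \sum_{p\le N}\log\!\bigl(1-p^{-s-it}\bigr) - \sum_{p \le N}\log\!\bigl(1-a_p p^{-s}\bigr) \Bigr| < \varepsilon/3,
\end{gather*}
the set of such $t$ in $[0,T]$ has positive lower density. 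Intersecting this with the set from the first step (whose density exceeds $1-o_N(1)$ once $N$ is sufficiently large) and applying the triangle inequality with the tail bound yields the lemma.

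\textbf{Main obstacle.} The non-trivial ingredient is the uniform mean-square approximation of $\log\zeta(\sigma+it)$ by the truncated Euler product for $\sigma > \tfrac12$; this requires the zero-density estimates and is the point where the restriction $\Re(s) > \tfrac12$ is genuinely used. Everything else (Weyl equidistribution, the Cauchy formula passage from $R$ to $K$, the convergence of $h_N \to h$) is routine. Since the statement is labelled ``well-known'', I would actually just cite a standard source such as Bagchi's thesis or Laurin\v{c}ikas's monograph for the mean-square step rather than reproving it.
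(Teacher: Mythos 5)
Your plan is correct and takes essentially the same approach as the paper: the paper's proof of Lemma \ref{LALA} is a one-line citation to Steuding's Theorem 4.12, and the ingredients you outline (mean-square Euler-product approximation of $\log\zeta$ via zero-density estimates, Weyl equidistribution on $\mathbb{T}^{\pi(N)}$ from the linear independence of $\{\log p\}$, the Cauchy-integral passage from a rectangle to $K$, and the intersection of a density-$(1-o_N(1))$ set with a positive-density set) are precisely the machinery behind that theorem. As you yourself note at the end, since this is standard the appropriate move is simply to cite, which is exactly what the paper does.
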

\begin{proof}
 This follows\footnote{It would also follow from \cite[Theorem 4.3]{Steuding} unless the result was artifically restricted to a half-strip $\sigma<\Re(s)<1$ (its proof holds in the more general context).} from e.g \cite[Theorem 4.12]{Steuding}. 
\end{proof} 
While Lemma \ref{LALA} is similar to the classical Voronin universality theorem, 
one difference is that we do not need to assume that the compact set $K$ has a connected complement\footnote{The condition comes from the application of Mergelyan's theorem in the proof.}. The key differences however are that we may allow $K$ to lie in the full half-plane $\Re(s)>\frac 1 2$, and that it is less clear what functions can be represented by \eqref{ep}. This is where the Pechersky rearrangement theorem is used in the classical argument\footnote{which only holds in the strip $\frac 1 2<\Re(s)<1$}. The main feature of our approach is that we replace the Pechersky rearrangement theorem with the following Lemma, which we will prove in subsection \ref{mainlemma}.
 \begin{lem}
 \label{LA0}
 For any $\varepsilon>0$, compact set $K$  and function  $f$,  satisfying the conditions of Theorem \ref{th2}  there exists some $\delta_0>0$ such that for any $0<\delta \leq \delta_0$ there exist unimodular complex numbers $|a_p|=1$  such that 
  \begin{gather*}
    h(s)=- \sum_{p} \log \p{1-\frac{a_p} {p^{s}}}, \\ \intertext{is convergent to an analytic function $h$ for $\Re(s)>\frac 1 2$ and such that}
     \max_{s \in K} \abs{h(1+\delta s)-f(s) }<\varepsilon.
  \end{gather*}
 \end{lem}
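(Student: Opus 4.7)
The plan is to construct unimodular $\{a_p\}$ by partitioning the primes according to the size of $\delta \log p$. Expanding the Euler logarithm, write $h(1+\delta s) = S(s) + R(s)$ where $S(s) := \sum_p a_p/p^{1+\delta s}$ is the linear piece and $R(s) := \sum_p \sum_{k \geq 2} a_p^k/(kp^{k(1+\delta s)})$ is the higher-order remainder. For $\delta$ small enough that $\delta \max_{s \in K}|s| < 1/2$, a routine absolute-convergence estimate gives $R(s) = R_0 + O_K(\delta)$ uniformly on $K$, where $R_0 := \sum_p \sum_{k \geq 2} a_p^k/(kp^k)$ is a constant (depending on the $a_p$) bounded in modulus by $\sum_p \sum_{k \geq 2} 1/(kp^k) < \infty$. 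It therefore suffices to produce $a_p$ with $S(s) \approx (C - R_0) + \int_0^\infty g(x)e^{-sx}dx$ uniformly on $K$.

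I would split the primes into a \emph{tiny} class $T = \{p : \delta \log p \leq \eta\}$ and the remaining primes, which I partition into blocks $B_k = \{p : u_{k-1} < \delta \log p \leq u_k\}$ on a fine grid $\eta = u_0 < u_1 < \cdots < u_N = L$. On each $B_k$, Mertens' theorem gives $\sum_{p \in B_k} 1/p = \log(u_k/u_{k-1}) + o(1)$, and $e^{-s \delta \log p} = e^{-s u_k} + O_K(u_k - u_{k-1})$. Since $|u_k g(u_k)| \leq 1$, I can write $u_k g(u_k) = \tfrac{1}{2}(\zeta_k^+ + \zeta_k^-)$ with $|\zeta_k^\pm|=1$ and distribute these two values alternately to the primes in $B_k$ (which has many primes for small $\delta$), achieving $\sum_{p \in B_k} a_p/p \approx u_k g(u_k) \sum_{p \in B_k} 1/p \approx (u_k - u_{k-1}) g(u_k)$. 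Summing over blocks produces a Riemann sum $\sum_k (u_k - u_{k-1}) g(u_k) e^{-s u_k}$ that converges uniformly on $K$ to $\int_0^\infty g(x) e^{-sx}dx$ as $\eta \to 0$, $L \to \infty$, mesh $\to 0$, and $\delta \to 0$ in the correct order. The tiny primes, for which $e^{-s \delta \log p} = 1 + O_K(\eta)$, contribute $\sum_{p \in T} a_p/p^{1+\delta s} = C_T + O_K(\eta \log(\eta/\delta))$ where $C_T := \sum_{p \in T} a_p/p$, and since $\sum_{p \in T} 1/p \approx \log(\eta/\delta) \to \infty$ as $\delta \to 0$, the same block-averaging trick lets us realize any prescribed $C_T$ of modulus up to $\log(\eta/\delta) + O(1)$, in particular $C_T = C - R_0$, thereby yielding $h(1+\delta s) \approx f(s)$ on $K$.

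The main obstacle is the circular dependence: $R_0$ is itself determined by the chosen $a_p$, so prescribing $C_T = C - R_0$ is self-referential. The resolution rests on the bound $|R_0| \leq \sum_p \sum_{k \geq 2} 1/(kp^k)$, which is an absolute constant, whereas the achievable range for $C_T$ grows without bound as $\delta \to 0$. I would proceed in two stages: first fix the moderate/large prime $a_p$'s, which fixes the corresponding contribution $R_0^{\mathrm{mod}}$; then for the tiny primes solve $C_T + R_0^{\mathrm{tiny}} = C - R_0^{\mathrm{mod}}$, where $C_T$ ranges over a disk of arbitrarily large radius while $R_0^{\mathrm{tiny}}$ stays in a fixed bounded disk, so the equation is solvable by a continuity or fixed-point argument. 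A secondary technical point is to verify the hypothesis of Lemma \ref{LALA} that $\sum_p a_p/p^s$ converges to an analytic function on $\Re(s) > 1/2$; this follows by partial summation from the boundedness of the block partial sums $\sum_{p \in B_k} a_p$ enforced by our construction.
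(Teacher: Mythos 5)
Your overall strategy matches the paper's: partition the primes by the size of $\delta\log p$, use the divergence of $\sum 1/p$ over small primes to hit the constant, and on a middle range choose $a_p$ averaging (over consecutive primes or over a block) to the amplitude $\delta\log p\, g(\delta\log p)$, so that the prime number theorem turns $\sum a_p/p^{1+\delta s}$ into a Riemann sum for the Laplace integral. The paper's $a_{p_n}=\exp(i(\xi_n+(-1)^n\theta_n))$ in \eqref{apdef} is exactly your ``two unimodular values distributed alternately.'' One real difference in bookkeeping: rather than splitting $h=S+R$ globally and confronting the self-referential constant $R_0$, the paper keeps the full $\log(1-a_p/p^{1+\delta s})$ for the small primes (\eqref{we3}, \eqref{we4}) and only linearizes for $p\geq P_0$, where the quadratic tail is uniformly small (\eqref{we2}); this dissolves the circularity outright. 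Your two-stage resolution is viable --- indeed $C_T+R_0^{\mathrm{tiny}}=-\sum_{p\in T}\log(1-a_p/p)$, so you end up solving \eqref{we3} in disguise --- but it is an unnecessary detour. Your Riemann-sum formulation is also less smoothness-hungry than the paper's partial-summation estimate, which requires replacing $g$ by a $\mathcal C^1$ approximation $\tilde g$; this is consonant with the paper's closing remark about the alternative recursive choice of $a_p$.

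The genuine gap is the large-prime tail. For a fixed target accuracy you must truncate at a finite $L$, after which the primes with $\delta\log p>L$ have no $a_p$ assigned, so the series $h$ and its analyticity on $\Re(s)>\frac 12$ are simply undefined. Your appeal to ``boundedness of the block partial sums $\sum_{p\in B_k}a_p$'' does not repair this: within a block the cumulative partial sums grow linearly, since the mean of the two alternated values is $u_kg(u_k)$ which is generally nonzero, so they are not bounded; and even if they were, one-block boundedness would say nothing about the infinite tail. What is needed is a tail assignment such as the paper's $a_{p_n}=(-1)^n$ for $p_n\geq P_3$ in \eqref{apdef}, which makes the cumulative partial sums $\sum_{p\leq N}a_p$ bounded --- so that partial summation gives convergence of $\sum_p a_p p^{-s}$ for $\Re(s)>\frac 12$ --- and simultaneously makes the tail contribution to $\sum a_p/p^{1+\delta s}$ small as in \eqref{we6}. (The same device is used for the intermediate range $P_1\leq p<P_2$.) With that tail specified, your sketch becomes essentially the paper's proof.
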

Theorem \ref{th2} follows from Lemma \ref{LALA} and Lemma \ref{LA0}. Theorem \ref{th} follows from Theorem \ref{th2} and the
following Lemma about Laplace-transforms which we will prove in subsection \ref{lemmathree}.
\begin{lem} \label{LA4} Assume that $f$ is a continuous function on a compact set $K$ with connected complement such that $f$ is analytic the interior of $K$. Then given $\varepsilon>0$ there exist some $A,B>0$  and continuous function $g:[A,B] \to \C$ such that if
\begin{gather*}
  G(s) = \int_A^B g(x) e^{-sx} dx, \\ \intertext{then}
  \max_{s \in K} \abs{G(s)-f(s)}<\varepsilon.
\end{gather*}
\end{lem}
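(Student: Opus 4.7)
The plan is to reduce the statement to two simpler approximation problems. First, I uniformly approximate $f$ on $K$ by a finite linear combination $\sum_j c_j e^{-\lambda_j s}$ with the $\lambda_j$ in a prescribed interval $[A,B]$, $0 < A < B$. Second, I realize such a sum as a Laplace transform $\int_A^B g(x) e^{-sx}\,dx$ of a continuous function, by smoothing each coefficient $c_j$ into a small continuous bump supported near $\lambda_j$ in $[A,B]$.

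For the first step I would argue by duality. Let $E$ denote the closed linear span in $C(K)$ (with the sup norm) of $\{e^{-\lambda s} : \lambda \in [A,B]\}$. By the Hahn--Banach theorem together with the Riesz representation theorem, it is enough to show that every finite complex Borel measure $\mu$ on $K$ which annihilates $E$ also annihilates $f$. For such a $\mu$, the function
$$F(\lambda) = \int_K e^{-\lambda s}\,d\mu(s)$$
is entire in $\lambda$ (since $K$ is compact and $\mu$ is finite) and vanishes on the whole interval $[A,B]$; hence $F \equiv 0$ on $\C$ by analytic continuation. Expanding $e^{-\lambda s}$ at $\lambda = 0$ then forces $\int_K s^k\,d\mu(s) = 0$ for every $k \geq 0$, so $\mu$ annihilates every polynomial. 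Since $K$ has connected complement, Mergelyan's theorem yields that polynomials are uniformly dense in the space of continuous functions on $K$ that are analytic in the interior; therefore $\mu$ annihilates $f$, which proves $f \in E$.

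For the second step, start with an approximation $\abs{\sum_j c_j e^{-\lambda_j s} - f(s)} < \varepsilon/2$ on $K$, where each $\lambda_j$ lies in the open interval $(A,B)$ (nudging if necessary). Choose pairwise disjoint continuous bump functions $\phi_j$ of integral $1$ supported in $[\lambda_j-\eta,\lambda_j+\eta] \subset (A,B)$, and set $g = \sum_j c_j \phi_j$, which is continuous on $[A,B]$ after extending by $0$ at the endpoints. Since $\abs{e^{-sx} - e^{-\lambda_j s}} = O(\eta)$ uniformly for $\abs{x - \lambda_j} \leq \eta$ and $s \in K$, one obtains
$$\abs{\int_A^B g(x) e^{-sx}\,dx - \sum_j c_j e^{-\lambda_j s}} = O(\eta)$$
uniformly for $s \in K$, so taking $\eta$ small enough makes this error less than $\varepsilon/2$ and completes the argument.

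The crux is the duality step combined with Mergelyan's theorem: vanishing of the entire function $F$ on an interval already forces $\mu$ to kill every polynomial via analytic continuation, and the hypothesis on the complement of $K$ upgrades this to killing all continuous functions on $K$ that are analytic in the interior. The smoothing step is routine, and the exact choice of $A>0$ plays no essential role; any $0 < A < B$ will work.
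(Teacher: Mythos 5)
Your argument is correct, but it takes a genuinely different route from the paper. The paper's proof is constructive: it first invokes Mergelyan's theorem directly to replace $f$ by a polynomial $p$, then forms $G(s)=p(s)/(1+\varepsilon_1 s)^n$ with $n=\deg p+2$ so that $G$ is the Laplace transform of an explicit inverse-transform $g$, and finally truncates the resulting integral $\int_0^\infty g(x)e^{-sx}\,dx$ to $[A,B]$. Your proof is a soft existence argument in the spirit of M\"untz--Sz\'asz: you pass to the dual space via Hahn--Banach and Riesz representation, observe that an annihilating measure $\mu$ makes $F(\lambda)=\int_K e^{-\lambda s}\,d\mu(s)$ an entire function vanishing on the interval $[A,B]$ and hence identically, deduce from the Taylor coefficients at $\lambda=0$ that $\mu$ kills every polynomial, and only then invoke Mergelyan (this is where the connected-complement hypothesis enters, just as in the paper) to conclude $\mu(f)=0$; the second, smoothing step with bump functions then converts a finite exponential sum into an honest integral kernel. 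Both proofs are valid and both hinge on Mergelyan; the trade-off is that the paper's route yields an explicit $g$ (useful if one wants effectivity, which the author flags as a goal in the final discussion), whereas yours is shorter and more conceptual but gives no construction of $g$. One small point worth making explicit in your write-up: take the bump functions $\phi_j$ nonnegative (or at least with $\int|\phi_j|$ uniformly bounded), and merge any coinciding $\lambda_j$ before choosing $\eta$, so that the supports can indeed be made pairwise disjoint and the $O(\eta)$ bound is uniform.
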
  

\subsection{Proof of Theorem \ref{th2}.}
 By Lemma \ref{LA0} we may find some  $\delta_0>0$ such that for any $0<\delta \leq \delta_0$ there exists some series
\begin{gather*} h(s)=-\sum_p \log\p{1-\frac{a_p}{p^{s}}},
\end{gather*} such that $|a_p|=1$ that is convergent on the half-plane $\Re(s)>\frac 1 2$ to an analytic function $h$ 
such that
\begin{gather} \label{uu1}
   \max_{s \in K} \abs{h(1+\delta s)-f(s)} <\frac {\varepsilon} 2.
\end{gather}
By using Lemma \ref{LALA} with the compact set $1+\delta K$ which for a sufficiently small $\delta$ lies in the half plane $\Re(s)>\frac 1 2$, it follows that
  \begin{gather} \label{uu2}
 \liminf_{T \to \infty} \frac 1 T \mathop{\rm meas} \left \{t \in [0,T]:\max_{z \in 1+\delta K} \abs{\log \zeta(z+it)-h(z)}<\frac \varepsilon  2 \right \}>0. 
\end{gather}
Our result follows by the change of variable $z=1+\delta s$, the inequalities \eqref{uu1}, \eqref{uu2} and the triangle inequality.
 \qed

\subsection{Proof of Theorem \ref{th}.}
Without loss of generality we assume that $0<\varepsilon<1$. It is clear that
\begin{gather*}
   \log(f(s)-C)=\log \p{-C \p{1- \frac {f(s)}  C}}= \log (-C)+\log \p{1-\frac {f(s)}   {C}},
\end{gather*}
and if we  assume that  \begin{gather} \label{C1def} |C| \geq  1+4 \varepsilon^{-1}  \max_{s \in K} |f(s)|
\end{gather}
 then it follows from  the elementary inequality \begin{gather*} \abs{\log(1+z)-z} \leq 2|z|^2/3, \qquad (|z|<1/4), \end{gather*} that 
\begin{gather} \label{ab1}
\max_{s   \in K} \abs{\frac{f(s)} {C} +\log \p{1-\frac {f(s)} {C}}}<\frac{\varepsilon} {6|C|}.
\end{gather}
By Lemma \ref{LA4} there exists some $0<A<B$ and continuous function  $g:[A,B] \to \C$ such that
\begin{gather} \label{ab0}
 \max_{s \in K}  \abs{G(s)-    f(s)}<\frac{\varepsilon} {6},  \\ \intertext{where}
    G(s)=\int_A^B g(x) e^{-sx} dx. \notag
\end{gather}
Let us also assume that \begin{gather*}  |C| \geq  \max_{A \leq x \leq B} |g(x)|. \end{gather*} Then the
function \begin{gather*} 
   h(s)=\frac {G(s)} {C}+  \log {(-C)}
   \end{gather*}
    satisfies the condition of Theorem  \ref{th2} so that there exists some $\delta_0 >0$ such that if $0<\delta \leq \delta_0$ then  
\begin{gather} \label{ab2}
  \max_{s \in K} \abs{\log \zeta(1+it+\delta s)-\frac{G(s)} C- \log (-C)}<\frac {\varepsilon} {6|C|} 
\end{gather}
holds with a  positive lower measure $0\leq t \leq T$ as $T \to \infty$. By  the inequalities \eqref{ab1}, \eqref{ab0},  \eqref{ab2} and the triangle inequality  it follows that
\begin{gather} \label{ab55}
  \max_{s \in K} \abs{\log \zeta(1+it+\delta s)-\log(f(s)-C)}<\frac {\varepsilon} {2|C|} 
\end{gather}
for such $t$. From the elementary inequality $|e^z-1| < 3|z|/2$ if $|z| \leq \frac 1 2$ it follows that
\begin{gather} \label{rrra}
  |X-Y| = \abs{Y} \cdot \abs{e^{\log(X/Y)} -1} < 3\abs{Y}/2 \cdot \abs{\log X-\log Y},
\end{gather}
when $\abs{\log X-\log Y} \leq \frac 1 2$. Now let $X=\zeta(1+it+\delta s)$ and $Y= f(s)-C$. From \eqref{C1def} it follows that $|Y| \leq 5|C|/4$ for $s \in K$, and from \eqref{ab55} it follows that $\abs{\log X-\log Y} \leq \frac 1 2$. Thus from \eqref{ab55} and \eqref{rrra} it follows that
\begin{gather*}
  \max_{s \in K} \abs{ \zeta(1+it+\delta s)+C-f(s)} < \frac 3 2   \cdot \frac {5|C|} 4 \cdot \frac{\varepsilon}{2|C|} <\varepsilon  
\end{gather*}
holds for any  $\delta,t$ such that \eqref{ab2} holds.   \qed

\section{Proofs of Lemmas}

\subsection{Proof of Lemma \ref{LA4}} \label{lemmathree}

By Mergelyan's theorem \cite{Mergelyan} it follows that the function $f$ can be approximated by a polynomial such that
\begin{gather} \label{i1}
  \max_{s \in K} \abs{f(s)-p(s)} <\frac{\varepsilon} 3.
 \end{gather}
 Consider the analytic function
\begin{gather*}
 G(s)= \frac{p(s)}{(1+\varepsilon_1 s)^n} 
\end{gather*}
on the half plane $\Re(s)> -{\varepsilon_1}^{-1}$ where $n=\deg p +2$.  It is clear that we may choose $\varepsilon_1>0$ sufficiently small so that
\begin{gather} \label{i2}
  \max_{s \in K} \abs{G(s)-p(s)} <\frac{\varepsilon} 3,
 \end{gather}
 and such that $K$ lies in the half plane $\Re(s)>-\varepsilon_1^{-1}$.
By the theory of Laplace transforms it follows that
\begin{gather} \label{xx1} G(s)=  \int_{0}^\infty e^{-sx} g(x)dx,  \qquad  (s \in K), \\ \intertext{where}
\notag g(x)=  \frac 1 {2 \pi i} \int_{c-\infty i} ^{c+\infty i} e^{sx} G(s)ds, \qquad (c>-\varepsilon_1^{-1}),
\end{gather}
is the inverse Laplace transform of $G$. It is clear that we can truncate the integral \eqref{xx1} so that
\begin{gather} \label{i3} \max_{s \in K} \abs{\int_{A}^{B} g(x) e^{-sx} dx - G(s)}<\frac{\varepsilon} 3.
\end{gather}
The lemma follows by \eqref{i1}, \eqref{i2}, \eqref{i3} and the triangle inequality. \qed

\subsection{Proof  of Lemma \ref{LA0}} \label{mainlemma}

We may find some $0<A<B$ such that 
\begin{gather} \label{we1}
 \max_{s \in K} \abs{\int_A^B \tilde g(x) e^{-sx} dx-f(s)+C}<\frac \varepsilon 7,
\end{gather}
where $\tilde g$ is a $\mathcal C^1$-function which is a suitable smoothed approximation\footnote{If $g$ is $\mathcal C^1$ we may choose $\tilde g=g$.}  of $g$ that satisfies
\begin{gather} \label{tildeg}
\abs{x\tilde g(x)} \leq 1. 
\end{gather}
Since $K$ is compact we may assume that $\delta_1>0$ satisfies
 \begin{gather}  \label{delta1def}
 \min_{s \in K} \Re(1+\delta_1 s) \geq \frac 3 4. \end{gather}
 By using the fact that $\log(1-z)=-z+O(z^2)$ and the fact that $\sum_p p^{-\frac 3 2}$ is convergent it follows that there exists some $P_0$ such that \begin{gather}  \label{we2}
\max_{s \in K} \sup_{|a_p|=1}\sum_{p  \geq P_0}   \abs{  \log \p{1-\frac{a_p}{ p^{1+\delta s}}}+ \frac{a_p}{ p^{1+\delta s}} }  <\frac{\varepsilon} 7, \qquad (0<\delta \leq \delta_1), \\ \intertext{where we may also choose $P_0$ sufficiently large such that} \label{aj}
  \sup_{P_0 \leq N< M} \max_{s \in K} \abs{\sum_{N \leq p_n < M} \frac {(-1)^n}{p_n^{1+\delta s}}}<\frac  \varepsilon 7, \qquad (0<\delta \leq \delta_1),
\end{gather}
where $p_1,p_2,\ldots=2,3,\ldots$ denote the primes in increasing order. 
  Since $\sum_p p^{-1}$ is divergent,  $p^{-1} \to 0$ as $p \to \infty$ and $\log(1-z) \sim -z$ as $z \to 0$  we may choose some $P_1 \geq P_0$ sufficiently large  
  such  that
\begin{gather} \label{we3}
\abs{ 
\sum_{p  < P_1} \log \p{1-\frac{a_p} p}+C}  <\frac{\varepsilon} 7.
\end{gather} 
By the fact that $K$ is compact and continuity it follows that
\begin{gather} \label{we4}
 \max_{s \in K} \abs{ 
\sum_{p  < P_1} \log \p{1-\frac{a_p} p}  - \sum_{p  < P_1} \log \p{1-\frac{a_p} {p^{1+\delta s}}}}  <\frac{\varepsilon} 7,  \qquad (0<\delta\leq \delta_2),
\end{gather}
provided $\delta_2>0$ is sufficiently small. Let us now define
\begin{gather} \label{P1P}
  P_2:=\exp(A \delta^{-1}), \qquad \text{and} \qquad  P_3:=\exp(B \delta^{-1}).
\end{gather}
If $0<\delta \leq \delta_3$ for some sufficiently small $\delta_3>0$ then we may be assured that $P_2 \geq P_1$. 
By the inequality \eqref{tildeg} we may now define 
$$x_n:=\delta \log p_n \tilde g(\delta \log p_n), \qquad  \theta_n:=\arccos(x_n), \qquad \xi_n:=\arg(x_n),$$
 where we choose $\xi_n:=0$ if $x_n=0$, and define for $p_n \geq P_1$
 \begin{gather} \label{apdef}  a_{p_{n}}:=\begin{cases} \exp(i(\xi_{n}+(-1)^n\theta_{n})), &   P_
 2 \leq p_n<P_3, \\ (-1)^n, & P_1 \leq p_n < P_2 \text{ or }  p_n \geq P_3. \end{cases}
 \end{gather} 
Let
 \begin{gather*}  \Lambda_\delta(x):=\sum_{A \leq \delta \log p < x}  \frac{a_p} p-\int_A^x \tilde g(t) dt. 
 \end{gather*}
 By the choice \eqref{apdef}, the estimate $p_{n+1}-p_n \ll \frac{p_n}{(\log p_n)^2}$ for the difference of consecutive primes and the fact that $\tilde g$ is $\mathcal C^1$ we have that
 \begin{gather*}
   \frac 1 2 \p{\frac{a_{p_n}}{p_n}+\frac{a_{p_{n+1}}}{p_{n+1}}}=\frac{\delta \log p_n \tilde g(\delta \log p_n)}{p_n}+O\p{\frac 1 {p_n (\log p_n)}},
 \end{gather*}
  and by invoking the following consequence of the prime number theorem
 \begin{gather*} \sum_{x \leq  \delta \log p \leq y} \frac {\delta \log p} p =y-x+O(\delta),\end{gather*}
 it follows that
  \begin{gather}
     \label{tre}
    \max_{A \leq x \leq B} \abs{\Lambda_\delta(x)} \leq C_1 \delta,
 \end{gather}
 for some $C_1>0$ and all $0<\delta<1$.
 By partial integration we find that
  \begin{gather} \label{thr} \begin{split}
    \sum_{P_2 \leq p< P_3}  \frac{a_p} {p^{1+\delta s}} - \int_A^B \tilde g(x) e^{-sx} dx  &= \int_A^B \Lambda_\delta'(x) e^{-sx} dx, \\ 
   =\Lambda_\delta(B)e^{-Bs}-\Lambda_\delta(A)e^{-As} &+s \int_A^B \Lambda_\delta(x) e^{-sx} dx.
  \end{split}
  \end{gather}
  Thus, since $K$ is compact so that $C_1 \delta (|s|(B-A)+1) \p{\abs{e^{-sA}}+\abs{e^{-sB}}}$ is bounded for $s \in K$ and can be as small as we wish provided $\delta$ is small enough,  it follows from \eqref{tre} and \eqref{thr} that
   \begin{gather} \label{we5} \max_{s \in K} \abs{\sum_{P_2 \leq p< P_3}  \frac{a_p} {p^{1+\delta s}} - \int_A^B \tilde g(x) e^{-sx} dx} < \frac{\varepsilon} 7, \qquad(0<\delta \leq \delta_4), \end{gather}
   for some sufficiently small $\delta_4>0$. Since $P_0 \leq P_1<P_3$ It is clear by \eqref{delta1def},\eqref{aj} and the definition of  $a_p$, Eq \eqref{apdef} for $P_1 \leq p< P_2$ and $p \geq P_3$ 
    that
\begin{gather} \label{we6}
  \max_{s \in K} \abs{\sum_{P_1 \leq p <P_2} \frac{a_p}{p^{1+\delta s}}} <\frac{\varepsilon} 7, \qquad  \max_{s \in K} \abs{\sum_{p\geq P_3} \frac{a_p}{p^{1+\delta s}}} <\frac{\varepsilon} 7, 
  \end{gather}
for $0<\delta \leq \delta_1$.
 By the choice \eqref{apdef} of $a_p$ for $p \geq P_3$ it is also clear that 
\begin{gather*}
  h(s):=-\sum_{p} \log \p{1-\frac{a_p}{ p^{s}}}
\end{gather*}
converges to an analytic function for $\Re(s)>\frac 1 2 $.
Finally our lemma follows with $\delta_0:=\min(1,\delta_1,\delta_2,\delta_3,\delta_4)$  by the inequalities \eqref{we1}, \eqref{we2},  \eqref{we3}, \eqref{we4}, \eqref{we5}, \eqref{we6}  and the triangle inequality. \qed

We would finally like to remark that instead of using the definition \eqref{apdef} for $a_p$ in the intermediate range $P_2 \leq p <P_3$ the alternative recursive definition
 \begin{gather*}
 a_p:=\exp\left(i \arg  \left( \int_A^{\delta \log p}  g(x) dx -\sum_{P_2 \leq q<p}  \frac{a_q} {q}  \right) \right)
\end{gather*}
gives the same result\footnote{In such  case it is not necessary to have a $\mathcal C^1$-function so we may replace $\tilde g$ with $g$ in \eqref{we1}.}.  We will give yet another way to prove this in the proof of  \cite[Lemma 2]{Andersson2} where we prove some corresponding joint universality results for Dirichlet 
$L$-functions.
\section{Final discussion}

Finally we would like to give four remarks on our main result, Theorem \ref{th}, and its proof.
\begin{enumerate}
\item  If we choose the compact set $K$ in the half-plane $\{s \in \C:\Re(s)>0\}$, then Theorem \ref{th}  gives us a universality theorem in the half-plane of absolute convergence, which to our knowledge is the second theorem of such type\footnote{The present author also has had (for some years...) a third result of this type in progress. To appear eventually...}, where the first is a theorem on the difference of two Epstein zeta-functions, and the universality result is in the lattice aspect \cite[Theorem 1.10]{AndSod}. Also it is clear that the same proof method  allows us to obtain the corresponding result  for any Dirichlet series with Euler product
\begin{gather*}
  L(s)= \prod_{p} \sum_{k=0}^\infty c_{p^k} p^{-ks} 
\\ \intertext{if there exist some $c>0$ such that  }
  \liminf_{N \to \infty} \sum_{N<p<N^{1+\xi}} \frac {|c_p| } p \geq \log(1+\xi) c, 
  \\ \intertext{for any $\xi>0$,}
  \lim_{p \to \infty} \frac{|c_p|} p=0, \qquad \sum_p \sum_{k=2}^\infty  \frac {|c_{p^k}| k \log p } {p^k}<\infty,
\end{gather*}
 and  $L(s)$ has abscissa of convergence $1$.  We do not need that the Dirichlet series $L(s)$ has an analytic continuation to $\Re(s)\leq 1$. It is sufficient that it is analytic for $\Re(s)>1$, as long as we require the compact set to lie in the half plane $\{s \in \C: \Re(s)>0\}$. We will further develop this idea in \cite{Andersson29}, where we also obtain joint universality results.
\item An advantage to our method is that it should not be too difficult to make it effective. The Pechersky rearrangement theorem is notoriously ineffective and the only effective method so far for proving universality is the one of Good \cite{Good}, further developed by Garunk\v{s}tis  \cite{Garunkstis}. 
\item The same general idea can be used to obtain universality results for the Hurwitz zeta-function. A remarkable feature is that this approach will also be able to handle algebraic irrational parameters. This will be further developed in  \cite{Andersson21}.
\item We may also use the main idea of this paper to prove universality in the family aspect of $L$-functions. We will further develop this idea in \cite{Andersson25}
\end{enumerate}

\bibliographystyle{plain}

\begin{thebibliography}{11}

\bibitem{Andersson1}
Johan Andersson.
\newblock {On the zeta function on the line Re(s) = 1}.
\newblock \href{https://arxiv.org/abs/1207.4336}{\texttt{arXiv:1207.4336 [math.NT].}}

\bibitem{Andersson2}
Johan Andersson.
\newblock {Non universality on the critical line}.
\newblock \href{https://arxiv.org/abs/1207.4927}{\texttt{arXiv:1207.4927 [math.NT].}}

\bibitem{Andersson29}
Johan Andersson.
\newblock Joint universality on the half plane of absolute convergence.
\newblock {\em manuscript},  2020.

\bibitem{Andersson21}
Johan Andersson.
\newblock Universality of the {H}urwitz zeta-function on the half plane of
  absolute convergence.
\newblock {\em manuscript}, 2020.
\bibitem{Andersson25}
Johan Andersson.
\newblock Universality of  $L$-functions in the family aspect on the half plane of absolute convergence. 
\newblock {\em manuscript}, 2020.


\bibitem{AndSod}
Johan Andersson and Anders S\"odergren.
\newblock On the universality of the {E}pstein zeta function.
\newblock {\em Comment. Math. Helv.}, 95:183--209, 2020, \href{https://doi.org/10.4171/CMH/485}{DOI 10.4171/CMH/485},
 \newblock \href{https://arxiv.org/abs/1508.05836}{\texttt{arXiv:1508.05836 [math.NT].}}


\bibitem{Garunkstis}
Ram\=unas Garunk\v{s}tis.
\newblock The effective universality theorem for the {R}iemann zeta function.
\newblock In {\em Proceedings of the {S}ession in {A}nalytic {N}umber {T}heory
  and {D}iophantine {E}quations}, volume 360 of {\em Bonner Math. Schriften},
  page~21. Univ. Bonn, Bonn, 2003.

\bibitem{Good}
Anton Good.
\newblock On the distribution of the values of {R}iemann's zeta function.
\newblock {\em Acta Arith.}, 38(4):347--388, 1980/81.

\bibitem{Mergelyan}
Sergey~N. Mergelyan.
\newblock On the representation of functions by series of polynomials on closed
  sets.
\newblock {\em Doklady Akad. Nauk SSSR (N.S.)}, 78:405--408, 1951.

\bibitem{Steuding}
J{\"o}rn Steuding.
\newblock {\em Value-distribution of {$L$}-functions}, volume 1877 of {\em
  Lecture Notes in Mathematics}.
\newblock Springer, Berlin, 2007.

\bibitem{Voronin1}
Sergei~M. Voronin.
\newblock The distribution of the nonzero values of the {R}iemann {$\zeta
  $}-function.
\newblock {\em Trudy Mat. Inst. Steklov.}, 128:131--150, 260, 1972.
\newblock Collection of articles dedicated to Academician Ivan Matveevi{\v{c}}
  Vinogradov on his eightieth birthday. II.




\bibitem{Voronin2}
Sergei~M. Voronin.
\newblock A theorem on the ``universality'' of the {R}iemann zeta-function.
\newblock {\em Izv. Akad. Nauk SSSR Ser. Mat.}, 39(3):475--486, 703, 1975.

\end{thebibliography}

\end{document}